\begin{document}

\title{On torsion-free modules and semi-hereditary rings}
\author{Ryoya ANDO}

\address{\tusaddress}
\address{SKILLUP NeXt Ltd., Chiyoda, Japan}
\email{\url{6122701@ed.tus.ac.jp}}
\email{\url{r_ando@skillupai.com}}
\date{\today}

\keywords{Semi-hereditary rings, torsion-free modules, coherent rings.}
\subjclass[2020]{13C05 (Primary) 16E60 (Secondary)}

\def\labelenumi{(\theenumi)}
\begin{abstract}
The class of semi-hereditary rings is an important class of rings in theories that do not assume the Noetherian condition, such as perfectoid ring
theory. We prove several results concerning the structure theory of this class, focusing on the relationship between semi-hereditary rings and the flatness of torsion-free modules. We also consider Shimomoto's problem concerning the flatness of the Frobenius map.

\end{abstract}
\maketitle
\section{Introduction}
	
Throughout this paper, all rings are commutative with the identity element. 

In recent years, rings without Noetherian conditions have been increasingly used in homological studies in commutative ring theory, exemplified by perfectoid ring theory. The class of coherent rings, which also appears in the context of perfectoid ring theory, is one of the important classes of rings that includes the class of Noetherian rings. For example, it is important in applications such as \cite{AMM} and \cite{SHIMOMOTO201124}. 

We discuss several observations on semi-hereditary rings, a subclass of coherent rings, which can be regarded as a generalization of Dedekind domains.

In \S \ref{sec:semi-here and torsion-free} and \S\ref{sec:Integral domaincases}, we give a brief review of semi-hereditary rings, torsion-free modules, and some properties of Pr\"ufer domains.

In \S \ref{sec:general cases},  we prove that the ring $A$ is semi-hereditary if and only if every torsion-free $A$-module is flat (\ref{thm:main}). This theorem is an extension of Chase's theorem (\cite{Chase}*{Theorem 4.1}) 
, \ref{thm:semi-here <=> torless is flat} of this paper).

In \S \ref{sec:decomp}, we will consider propositions concerning the decomposition of semi-hereditary rings into direct products in certain cases. While there are previous studies on the decomposition theorem for semi-hereditary rings, such as \cite{Faith1975}, \cite{Drozd1980},  and \cite{MO2001}, we will attempt a different approach from those.

Finally, in \S \ref{sec:perfectoid}, we show that the main theorem can be applied to several propositions in perfectoid ring theory. In perfectoid ring theory, torsion-free modules play an important role in several propositions originating from \cite{Lurie}*{Theorem 3.2.}, inspired by O. Gabber. The results of this work solve \cite{shimomoto}*{Open Problem 7.13.}.

\section{Semi-hereditary rings and torsion-free modules}\label{sec:semi-here and torsion-free}

As for coherent rings, a well-known textbook is \cite{Glaz}, which summarizes the properties of semi-hereditary rings. Additionally, the classes of rings such as semi-hereditary, Pr\"ufer, and von Neumann regular that appear in this paper are also well summarized in \cite{Lam1999}. We will recall the definitions of these concepts.

\begin{defi}
    Let $A$ be a ring. $A$ is called \textbf{hereditary} if every ideal of $A$ is projective. Also, $A$ is called \textbf{semi-hereditary} if every finitely generated ideal of $A$ is projective.
\end{defi} 

As basic examples, valuation rings and Dedekind domains are semi-hereditary.

We will introduce the characterization of semi-hereditary rings given by \cite{Chase}. Recall that an $A$-module $M$ is called \textbf{torsion-less} if the following map is injective;
\[M\to \hom(\hom(M,A),A); x\mapsto (f\mapsto f(x)).\] 

\begin{thm}[\cite{Chase}*{theorem 4.1}]\label{thm:semi-here <=> torless is flat}
	A ring $A$ is semi-hereditary if and only if every torsion-less $A$-module is flat.
\end{thm}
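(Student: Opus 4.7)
The plan is to characterise torsion-less modules as the submodules of direct products $A^{I}$ and to combine this with Chase's companion criterion that a ring $A$ is coherent if and only if every product $A^{I}$ is a flat $A$-module. Concretely, $M$ is torsion-less exactly when the evaluation map $M\hookrightarrow A^{\hom(M,A)}$ is injective; conversely any submodule of some $A^{I}$ is torsion-less because the coordinate projections separate points.

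For the forward direction, suppose $A$ is semi-hereditary. Then every finitely generated ideal is projective, hence finitely presented, so $A$ is coherent, and by Chase's coherence criterion every product $A^{I}$ is flat. Semi-hereditariness also forces the weak global dimension of $A$ to be at most one, so, via the long exact sequence of $\mathrm{Tor}$, every submodule of a flat $A$-module is again flat. Any torsion-less $M$ embeds in some $A^{I}$, so $M$ is flat as a submodule of a flat module.

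For the converse, each $A^{I}$ is torsion-less (its coordinate projections form a separating family of linear functionals), hence flat by hypothesis; Chase's criterion therefore forces $A$ to be coherent. Now any finitely generated ideal $\mathfrak{a}\subseteq A$ sits inside the free module $A$ and so is torsion-less, hence flat by hypothesis; being finitely generated over the coherent ring $A$ it is moreover finitely presented, and a finitely presented flat module is projective. Thus every finitely generated ideal of $A$ is projective.

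The main obstacle I anticipate is essentially bookkeeping around the two auxiliary results of Chase (coherence $\Leftrightarrow$ all products of flat modules are flat, and finitely presented flat modules over a coherent ring are projective); once those are in hand, the argument reduces to the short chain of implications above, built on the identification torsion-less $=$ submodule of some $A^{I}$.
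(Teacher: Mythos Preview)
The paper does not supply its own proof of this statement: it is quoted verbatim as \cite{Chase}*{Theorem 4.1} and used as a black box (notably in the proof of \ref{thm:main}). Consequently there is nothing in the paper to compare your argument against.

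That said, your argument is correct and is essentially the classical one. The identification of torsion-less modules with submodules of powers $A^{I}$ is standard, and both directions go through as you describe. Two minor remarks. First, the step ``semi-hereditary $\Rightarrow$ weak global dimension $\le 1$'' deserves one line of justification: every finitely generated ideal is projective, hence every ideal is a filtered colimit of flat modules and so flat; then for any module $M$ and any ideal $I$ one has $\mathrm{Tor}_2(A/I,M)\cong\mathrm{Tor}_1(I,M)=0$, which suffices. Second, in the converse you do not need coherence to conclude that a finitely presented flat module is projective; that implication holds over any ring. Coherence is only used to upgrade ``finitely generated'' to ``finitely presented'' for the ideal $\mathfrak{a}$, exactly as you say.
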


Let us also consider torsion-free modules here. Let us recall the definition.

\begin{defi}
    An $A$-module $M$ is called \textbf{torsion-free} if for any non-zero divisor $a \in A$, the natural map $a\cdot: M \to M$ is injective. 
\end{defi}

Obviously, a torsion-less module is torsion-free, and a flat module is also torsion-free. When does the converse hold? 

For example, the $\Z$-module $\Q$ is torsion-free but not torsionless (since $\hom_\Z(\Q, \Z) = 0$). So, when does a torsion-free module become flat? Chase's theorem (\cite{Chase}*{Theorem 4.2}, \ref{thm:Chase} of this paper) characterizes rings where flatness and torsion-free-ness are equivalent in the case of integral domains. Such rings are Pr\"ufer domains.

Here, by \ref{thm:semi-here <=> torless is flat}, we pay attention to the fact that a ring where flatness and torsion-free-ness are equivalent is semi-hereditary. In this paper, we extend this theorem and prove that the class of rings where flatness and torsion-free-ness are equivalent, even in the general case, is exactly the class of semi-hereditary rings (\ref{thm:main}). We also summarize the situation for the integral domain case, Noetherian case, and general case.

\section{Integral domain cases}\label{sec:Integral domaincases}

In the case of integral domains, the key concept is that of Pr\"ufer domains, which are known as a generalization of Dedekind domains. Let us begin by reviewing the definition.

\begin{defi}
	A domain $A$ is called a Pr\"ufer domain, if for any prime ideal $P$ of $A$, $A_P$ is a valuation ring.
\end{defi}

This condition is equivalent to any finitely generated ideal being projective. Although this fact is well known, We will provide a proof.

\begin{prop}\label{prop:cond. of Prufer}
	Let $A$ be a integral domain. The following conditions are equivalent:
	\begin{sakura}
		\item $A$ is a  Pr\"ufer domain;
		\item Every finitely generated ideal is projective;
		\item Every ideal is flat.
	\end{sakura}	
\end{prop}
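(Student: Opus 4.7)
The plan is to establish the cycle $(1)\Rightarrow(2)\Rightarrow(3)\Rightarrow(1)$. For $(1)\Rightarrow(2)$, given a finitely generated ideal $I$ of a Pr\"ufer domain $A$, each localization $I_P$ is a finitely generated ideal of the valuation domain $A_P$ and is therefore principal. From this one deduces by a standard computation in the fraction field that $I(A:I)=A$, i.e.\ that $I$ is invertible, and the dual basis lemma yields the projectivity of $I$. The implication $(2)\Rightarrow(3)$ is routine: every projective module is flat, every ideal is the filtered union of its finitely generated sub-ideals, and flatness is preserved under filtered colimits.

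The substance lies in $(3)\Rightarrow(1)$. Since flatness localizes, it suffices to show that a local domain $A$ in which every ideal is flat must be a valuation domain; that is, for arbitrary nonzero $a,b\in A$, one of $a\mid b$ or $b\mid a$ holds. I apply the equational criterion for flatness to the flat ideal $I=(a,b)$ and the evident syzygy $b\cdot a + (-a)\cdot b=0$, obtaining $y_1,\dots,y_n\in I$ and $r_j, s_j\in A$ satisfying $a=\sum_j r_j y_j$, $b=\sum_j s_j y_j$, and $br_j = as_j$ for each $j$. Writing $y_j=\alpha_j a + \beta_j b$ and substituting gives $a=Xa+Yb$ and $b=Za+Wb$, where $X=\sum_j r_j\alpha_j$, $Y=\sum_j r_j\beta_j$, $Z=\sum_j s_j\alpha_j$, $W=\sum_j s_j\beta_j$. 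Multiplying $br_j=as_j$ by $\beta_j$ and summing gives $bY=aW$, while $a=Xa+Yb$ rearranges to $(1-X)a=Yb$. Comparing and canceling $a$ (using that $A$ is a domain) yields the decisive identity $X+W=1$. Since $A$ is local, at least one of $1-X$, $1-W$ lies outside the maximal ideal; the relations $(1-X)a=Yb$ and $(1-W)b=Za$ then give $a\in(b)$ or $b\in(a)$ respectively.

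The main obstacle is the extraction of the identity $X+W=1$: this requires careful bookkeeping of the coefficients delivered by the equational criterion, together with essential use of the domain hypothesis for cancellation. Without this identity the local hypothesis has nothing to latch onto, and the case analysis collapses; with it, the rest of the proof is immediate.
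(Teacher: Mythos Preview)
Your argument is correct. For $(1)\Rightarrow(2)$ and $(2)\Rightarrow(3)$ you proceed essentially as the paper does: both localize a finitely generated ideal to a principal one, and both write an arbitrary ideal as a filtered colimit of its finitely generated subideals to pass from projectivity to flatness. (Your detour through invertibility and the dual basis lemma is in fact a clean way to justify the step the paper phrases as ``projectivity is a local property.'')

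The genuine difference is in $(3)\Rightarrow(1)$. After the same reduction to the local case, the paper finishes in one stroke by citing Kaplansky's theorem that a finitely generated flat module over a local ring is free: every finitely generated ideal of $A_P$ is then free of rank one, hence principal, so $A_P$ is a valuation ring. You instead apply the equational criterion for flatness directly to the two-generated ideal $(a,b)$ and extract the identity $X+W=1$ by hand. In effect you are reproving, from first principles, exactly the special case of Kaplansky's theorem needed here. This makes the argument self-contained and exhibits precisely where the domain hypothesis (cancelling $a$ to obtain $X+W=1$) and the local hypothesis (forcing one of $X=1-W$, $W=1-X$ to be a unit) do their work; the paper's route is shorter but imports a stronger external theorem.
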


\begin{proof}
	\begin{eqv}[3]
 
        \item Let $I$ be a finitely generated ideal of $A$. For any $P \in \spec A$, $I_P$ is a finitely generated ideal of $A_P$, so it is principal. Since $A_P$ is a domain, $I_P$ is free, and hence projective. Since projectivity is a local property, $I$ is projective. 
        \item Let $I$ be an ideal of $A$. It can be expressed as $I = \ilim I_i$ by a family of finitely generated ideals $\{I_i\}$. Since each $I_i$ is projective, for any $A$-module $M$, we have $\Tor_1(I, M) = \Tor_1(\ilim I_i, M) = \ilim \Tor_1(I_i, M) = 0$. Hence, $I$ is flat. 
        \item For any $P \in \spec A$ and finitely generated ideal $J$ of $A_P$, $J$ is the localization of an ideal of $A$, so it is flat. Therefore, by Kaplansky's theorem \cite{Kaplansky1958}, it must be free and principal. Hence, $A_P$ is a valuation ring. 
	\end{eqv}
\end{proof}

Let us give a simple proof of \cite{Chase}*{Theorem 4.2}.

\begin{thm}[\cite{Chase}*{Theorem 4.2}] \label{thm:Chase}
	Let $A$ be a domain. The following conditions are equivalent:
	\begin{sakura}
		\item $A$ is a Pr\"ufer domain;
		\item Every torsion-free $A$-module is flat.
	\end{sakura}  
\end{thm}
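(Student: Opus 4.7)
The plan is to prove the two implications separately, reducing the nontrivial direction to the case of a valuation ring and then invoking Proposition~\ref{prop:cond. of Prufer} for the easy direction.

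Direction (2)$\Rightarrow$(1) is almost immediate: since $A$ is a domain, every ideal $I \subset A$ is a submodule of the torsion-free module $A$, and so $I$ is itself torsion-free. By hypothesis $I$ is then flat, and Proposition~\ref{prop:cond. of Prufer}, condition~(3), gives at once that $A$ is Pr\"ufer.

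For (1)$\Rightarrow$(2), let $M$ be a torsion-free $A$-module. Since flatness is a local property, I would reduce to showing that $M_P$ is flat over $A_P$ for every $P \in \spec A$. Localization of a torsion-free module over a domain is again torsion-free (one can see this by embedding $M \hookrightarrow M \otimes_A K$ with $K = \operatorname{Frac}(A)$ and applying exactness of localization), and $A_P$ is a valuation ring by the definition of Pr\"ufer, so the whole theorem reduces to the claim that over a valuation domain $V$ every torsion-free $V$-module is flat.

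For this reduced claim I would use the usual Tor criterion: $M$ is flat over $V$ iff $I \otimes_V M \to M$ is injective for every finitely generated ideal $I$. In a valuation ring every finitely generated ideal is principal, say $I = (a)$; the case $a = 0$ is trivial, and for $a \neq 0$ the isomorphism $V \xrightarrow{\sim} (a)$, $1 \mapsto a$, identifies $(a) \otimes_V M \to M$ with the multiplication-by-$a$ map, which is injective by torsion-freeness. The only real input is that finitely generated ideals in a valuation ring are principal; once that is noted, the rest is essentially formal, so I do not anticipate any serious obstacle.
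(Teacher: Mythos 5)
Your proof is correct and takes essentially the same approach as the paper: the forward direction localizes at each prime, observes that localization preserves torsion-freeness over a domain, and uses principality of finitely generated ideals in the valuation ring $A_P$; the reverse direction reduces to flatness of ideals. Your reverse direction is a touch cleaner than the paper's — you cite condition (3) of Proposition~\ref{prop:cond. of Prufer} directly (every ideal of $A$, being a submodule of $A$, is torsion-free and hence flat), whereas the paper re-derives the same fact locally by pulling back finitely generated ideals of $A_P$ — but the underlying idea is identical.
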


\begin{proof}
    (i)$\Longrightarrow$ (ii): Let $M$ be a torsion-free $A$-module. It suffices to show for any $P\in\spec A, M_P$ is flat. Now, $M_P$ is a torsion-free $A_P$ module. This is because if $a/s \neq 0 \in A_P$ and $x \in M$, and if $ax/s = 0$, then for some $h \not\in P$, we have $ahx = 0$. Since $A$ is a domain and $a/s \neq 0$ implies $ah \neq 0$, and because $M$ is torsion-free, it follows that $x = 0$. Now, if $J$ is a finitely generated ideal of $A_P$, since $A_P$ is valuative, $J$ is a principal ideal. Therefore, since $M_P$ is torsion-free, the map $J \otimes M_P \to M_P$ is injective. This shows that $M_P$ is flat.
	
    (ii)$\Longrightarrow$(i): It suffices to show for any $P\in\spec A, A_P$ is a valuative ring. Let $J$ be a finitely generated $A_P$ ideal. Let $I$ be the pullback of $J$ along the natural map $A \to A_P$. Since $A$ is a domain, $I$ is torsion-free, i.e., flat. Hence, $J = I_P$ is also flat, and since $A_P$ is local, $J$ is free. Therefore, $J$ is principal, and $A_P$ is valuative.
\end{proof}

The key point of this proof is that in a domain, the localization of a torsion-free module remains torsion-free. This perspective is important when extending this theorem. See \ref{prop: A reduced and Q(A) v.N.r => tf is local condition}.

Let us organize the relationships between Pr\"ufer domains and other classes. The following two facts are well known.

\[\text{Noetherian Pr\"ufer domain $\Longleftrightarrow$ Dedekind domain.}\]
\[\text{Pr\"ufer domain $\Longleftrightarrow$ semi-hereditary domain.}\]

\section{General Cases}\label{sec:general cases}

We will now discuss the case where we do not assume the integral domain condition, considering both the case where Noetherian condition is imposed and the more general case where it is not.

\begin{prop}\label{prop:(tf->f) => reduced}
    Let $A$ be a ring. If every torsion-free $A$-module is flat, then $A$ is reduced.
\end{prop}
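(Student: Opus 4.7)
The plan is to argue the contrapositive: if $A$ is not reduced, I will exhibit a torsion-free $A$-module that fails to be flat. Starting from a nonzero nilpotent and replacing it by an appropriate power, choose $a \in A$ with $a \neq 0$ and $a^2 = 0$. The test module is $M := A/\mathrm{Ann}_A(a)$, which, via the factorization of the multiplication-by-$a$ map $A \xrightarrow{\cdot a} A$ through its image, is isomorphic as an $A$-module to the principal ideal $aA$; under this isomorphism $\bar 1 \in M$ corresponds to $a \in aA$, so in particular $M \neq 0$.

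First I would check that $M$ is torsion-free. If $r \in A$ is a non-zero-divisor and $r\bar x = 0$ in $M$, then $rxa = 0$ in $A$; since $r$ is a non-zero-divisor this forces $xa = 0$, i.e., $x \in \mathrm{Ann}_A(a)$, and hence $\bar x = 0$.

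The crux is then to show $M$ is not flat. I would invoke the standard consequence of flatness that, for a flat $A$-module $N$ and any $r \in A$, one has $\mathrm{Ann}_N(r) = \mathrm{Ann}_A(r)\cdot N$, obtained by tensoring the exact sequence $0 \to \mathrm{Ann}_A(r) \to A \xrightarrow{r} A$ with $N$. Apply this with $N = M \cong aA$ and $r = a$: since $a^2 = 0$, multiplication by $a$ kills every element of $aA$, so $\mathrm{Ann}_M(a) = M$; on the other hand, every element of $\mathrm{Ann}_A(a)$ annihilates $a$, so $\mathrm{Ann}_A(a)\cdot(aA) = 0$. Flatness of $M$ would then force $M = 0$, contradicting $a \neq 0$. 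No serious obstacle is anticipated; the one delicate choice is to take $a$ with $a^2 = 0$ rather than an arbitrary nilpotent, so that $a$ itself annihilates the whole of $aA$ and both sides of the flatness identity can be read off at once.
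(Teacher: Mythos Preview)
Your argument is correct. The torsion-freeness check for $A/\mathrm{Ann}_A(a)\cong aA$ is fine, and the flatness identity $\mathrm{Ann}_N(r)=\mathrm{Ann}_A(r)\,N$ for flat $N$ (from tensoring $0\to\mathrm{Ann}_A(r)\to A\xrightarrow{r}A$) gives the contradiction $M=\mathrm{Ann}_M(a)=\mathrm{Ann}_A(a)\cdot aA=0$ exactly as you say.

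Your route differs from the paper's in two respects. The paper takes as test module the canonical quotient $A/\nil(A)$ (torsion-free because a non-zero-divisor stays regular on $A/\nil(A)$), and then, for each $a\in\nil(A)$, uses flatness in the form ``$aA\otimes_A A/\nil(A)\to A/\nil(A)$ is injective'' together with the observation that this map is zero; this yields $aA=\nil(A)\cdot aA$, and the conclusion $aA=0$ is obtained via Nakayama (Matsumura, Theorem~2.2), using that $1+\nil(A)\subset A^\times$. Your approach instead localizes attention to a single $a$ with $a^2=0$, works with $aA$ directly, and replaces the Nakayama step by the annihilator criterion, which makes the endgame a one-line identity. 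The trade-off: the paper's choice of $A/\nil(A)$ is canonical and handles all nilpotents simultaneously, while your argument is slightly more elementary in that it avoids any appeal to NAK and needs only the trivial reduction to a square-zero element.
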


\begin{proof}
    Regarding $\nil(A) = \mkset{a \in A}{a^n = 0~\text{for some } n > 0}$, $A/\nil(A)$ is a torsion-free $A$-module. By assumption, $A/\nil(A)$ is flat, so for any $a \in \nil(A)$, the natural map $aA \otimes_A (A/\nil(A)) \to A/\nil(A)$ is injective. However, this is a zero map, so $aA \otimes A/\nil(A) = 0$. Therefore, we have $aA = \nil(A) \cdot aA$, and by NAK \cite{Matsumura1986}*{Theorem 2.2}, it follows that $aA = 0$. Hence, $\nil(A) = 0$.
\end{proof}

This proposition shows that in the case where Krull dimension is 0, the class of rings for which torsion-free modules are flat is none other than the class of von Neumann regular rings. Let us recall the definition. 

\begin{defi}
    A ring $A$ is called \textbf{von Neumann regular} (or absolutely flat) if for any element $a \in A$, there is a $b \in A$ satisfying $a^2b = a$.
\end{defi}

\begin{cor}\label{cor:v.N.r <=> (tf->f) under dim = 0}
	Let $A$ be a ring with $\dim A= 0$. The following conditions are equivalent:
	\begin{sakura}
		\item $A$ is von Neumann regular;
		\item Every torsion-free $A$-module is flat.
	\end{sakura}
\end{cor}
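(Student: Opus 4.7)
The plan is to reduce both directions to well-known characterizations of von Neumann regular rings, using Proposition \ref{prop:(tf->f) => reduced} for the non-trivial direction.

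For (i) $\Longrightarrow$ (ii), I would invoke the classical equivalent characterization: a ring $A$ is von Neumann regular if and only if every $A$-module is flat (see e.g.\ \cite{Lam1999}). In particular, every torsion-free $A$-module is flat, so this direction is immediate and requires nothing beyond $\dim A = 0$.

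For (ii) $\Longrightarrow$ (i), I would first apply Proposition \ref{prop:(tf->f) => reduced} directly to conclude that $A$ is reduced. It then remains to verify the standard fact that a reduced ring of Krull dimension zero is von Neumann regular. This can be checked locally at each prime: since $\dim A = 0$, every prime is both minimal and maximal, so for each $\mathfrak{p} \in \spec A$ the localization $A_\mathfrak{p}$ is a zero-dimensional reduced local ring, i.e.\ a field. A ring that is a field at every localization is precisely a von Neumann regular ring, so we conclude.

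There is no real obstacle here: once Proposition \ref{prop:(tf->f) => reduced} is in hand, the corollary is essentially a repackaging of standard facts, and indeed its purpose in the exposition is to highlight that in Krull dimension zero the class of rings identified in this paper specializes to the familiar class of von Neumann regular rings — an observation that motivates and frames the general theorem proved in the next section.
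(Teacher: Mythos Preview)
Your proof is correct and follows essentially the same approach as the paper: both directions reduce to the standard facts that von Neumann regular rings are precisely the rings over which every module is flat, and that reduced plus Krull dimension zero characterizes von Neumann regularity, with Proposition~\ref{prop:(tf->f) => reduced} supplying the reducedness. The only cosmetic difference is that the paper cites \cite{Glaz}*{Theorem 1.4.6} and \cite{Lam1999}*{Theorem 3.71} for these two facts, whereas you sketch the second via localization; the logic is identical.
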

	
\begin{proof}
(i) $\Longrightarrow$ (ii)

If $A$ is von Neumann regular, then by \cite{Glaz}*{Theorem 1.4.6}, any $A$-module is flat.

(ii) $\Longrightarrow$ (i)

By \ref{prop:(tf->f) => reduced}, $A$ is reduced. A ring $A$ being reduced and having $\dim A = 0$ is equivalent to being von Neumann regular \cite{Lam1999}*{Theorem 3.71}, so $A$ is von Neumann regular.
\end{proof}

We introduce a restatement of the property of being semi-hereditary, as described by Glaz.

\begin{thm}[\cite{Glaz}*{Corollary 4.2.19}]
	Let $A$ be a ring. The following conditions are equivalent:
	\begin{sakura}
		\item $A$ is semi-hereditary.
		\item The total quotient ring of $A$, $Q(A)$ is von Neumann regular and for any maximal ideal $\ideal{m}$ of $A$, $A_{\ideal{m}}$ is a valuation ring.
	\end{sakura}
\end{thm}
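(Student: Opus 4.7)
The plan is to prove each direction separately.

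For (i)$\Rightarrow$(ii), I would first observe that semi-hereditariness localizes: every finitely generated ideal of $S^{-1}A$ is the localization of a finitely generated ideal of $A$, and projectivity is preserved by localization. The key local step is then to show that a local semi-hereditary ring is a valuation domain. Over a local ring, finitely generated projective modules are free, so every finitely generated ideal of $A_{\ideal{m}}$ is free. But a free ideal must be cyclic (two basis elements $x,y$ would give the relation $y\cdot x - x\cdot y = 0$, contradicting linear independence) and generated by a non-zero-divisor, so $A_{\ideal{m}}$ is a local Bézout domain, hence a valuation domain. Consequently $A \hookrightarrow \prod_{\ideal{m}} A_{\ideal{m}}$ is reduced. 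Then $Q(A)$ is semi-hereditary (by the same localization argument) and reduced, and its prime spectrum corresponds bijectively to the minimal primes of $A$, giving $\dim Q(A) = 0$. A reduced zero-dimensional ring is von Neumann regular.

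For (ii)$\Rightarrow$(i), let $I$ be a finitely generated ideal of $A$; the goal is to show $I$ is projective. Locally, $I_{\ideal{m}}$ is a finitely generated ideal of the valuation ring $A_{\ideal{m}}$, hence principal. Globally, $I \otimes_A Q(A)$ is a finitely generated ideal of the von Neumann regular ring $Q(A)$, hence generated by an idempotent $e$ with $\mathrm{Ann}_{Q(A)}(I) = (1-e)Q(A)$; this pins down the annihilator of $I$ and forces each $I_{\ideal{m}}$ to be free of rank zero or one, so $I$ is locally free. To conclude, I would then show $I$ is finitely presented by assembling the local principal generators into a presentation $A^n \to I$ whose kernel is finitely generated, using the idempotent structure in $Q(A)$ to control the global relations. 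A finitely generated, locally free, finitely presented module is projective.

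The main obstacle I anticipate is the finite presentation step in (ii)$\Rightarrow$(i): since $A$ is not assumed Noetherian, showing that the kernel of $A^n \to I$ is finitely generated requires delicate patching of local valuation data via the von Neumann regular structure of $Q(A)$ and the quasi-compactness of $\spec A$. The (i)$\Rightarrow$(ii) direction, by contrast, reduces to local linear-algebra arguments once semi-hereditariness is shown to localize.
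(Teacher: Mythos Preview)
The paper does not actually prove this theorem; it is quoted verbatim from \cite{Glaz}*{Corollary 4.2.19} and used as a black box. So there is no proof in the paper to compare your attempt against, and I can only comment on the sketch on its own merits.

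Your (i)$\Rightarrow$(ii) argument has a genuine gap at the sentence ``its prime spectrum corresponds bijectively to the minimal primes of $A$, giving $\dim Q(A)=0$.'' Primes of $Q(A)$ correspond to primes of $A$ consisting entirely of zero-divisors, and for a general reduced ring these need \emph{not} be minimal: the paper itself exhibits a reduced ring with $\dim Q(A)=2$ (\ref{ex:reduced and dim Q(A) =2}). You must use semi-hereditariness at this step, and you have not. One clean fix: for any $a\in Q(A)$ the principal ideal $(a)$ is projective, so the surjection $Q(A)\twoheadrightarrow(a)$ splits and $\ann_{Q(A)}(a)=eQ(A)$ for an idempotent $e$; a short computation shows $a+e$ is a non-zero-divisor in $Q(A)$, hence a unit, and then $a=a(a+e)(a+e)^{-1}=a^{2}(a+e)^{-1}$ gives von Neumann regularity directly, bypassing any claim about minimal primes. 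Your local analysis (free ideals are cyclic, hence $A_{\mathfrak m}$ is a valuation domain, hence $A$ is reduced) is fine.

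For (ii)$\Rightarrow$(i) you have correctly isolated the obstacle and been honest about it: each $I_{\mathfrak m}$ is free of rank $0$ or $1$, so $I$ is flat, but upgrading ``finitely generated flat'' to ``projective'' requires finite presentation, which you have not established. Your instinct to pull idempotents back from $Q(A)$ is the right one---the von Neumann regular hypothesis is precisely what lets annihilators of elements of $A$ be generated by idempotents of $A$---but the patching you allude to is where the real content lies, and as written this direction remains a plan rather than a proof.
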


We note that if $A$ is a reduced ring, then $Q(A)$ is also reduced. Therefore, if $\dim Q(A) = 0$, $Q(A)$ is von Neumann regular. Furthermore, when $A$ is a Noetherian reduced ring, we have $\dim Q(A) = 0$ (see \cite{Matsumura1986}*{Exercise 6.5}). But note that this is not always the case in general, see \ref{ex:reduced and dim Q(A) =2} and \cite{Quentel1971}.

The condition that the dimension of total quotient ring  is $0$ plays an important role in ensuring that the localization of a torsion-free module remains torsion-free.

\begin{prop}\label{prop: A reduced and Q(A) v.N.r => tf is local condition}
    Let $A$ be a ring with $\dim Q(A) = 0$. For any torsion-free $A$-module $M$ and $P\in\spec A$, $M_P$ is a torsion-free $A_P$-module.
\end{prop}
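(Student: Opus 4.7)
The plan is to use the characterization that an $A$-module $N$ is torsion-free if and only if the natural map $N \to N \otimes_A Q(A)$ is injective: this map is $N \to S^{-1}N$ for $S = \mathrm{NZD}(A)$, and its kernel is precisely the torsion submodule of $N$. Applying this to $M$ and localizing at $P$ (localization preserves injections) yields
\[
    M_P \hookrightarrow (M \otimes_A Q(A))_P \cong M_P \otimes_{A_P} Q(A)_P.
\]
Torsion-freeness of $M_P$ over $A_P$ will then follow once every non-zero-divisor of $A_P$ acts invertibly on the right-hand side, and this in turn reduces to showing that every non-zero-divisor of $A_P$ has \emph{unit} image in $Q(A)_P$.

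The entire argument thus rests on this last claim, and this is exactly where the hypothesis $\dim Q(A) = 0$ enters. Since localization cannot create primes, $Q(A)_P$ is also zero-dimensional; in any zero-dimensional ring every non-unit lies in a minimal prime, and minimal primes consist of zero-divisors, so every non-unit is a zero-divisor. Suppose for contradiction that $w = a/s$ is a non-zero-divisor of $A_P$ whose image in $Q(A)_P$ is not a unit; then there is a nonzero $\alpha \in Q(A)_P$ with $w\alpha = 0$. Using the presentation $Q(A)_P = U^{-1}A$ with $U = \mathrm{NZD}(A) \cdot (A \setminus P)$, I write $\alpha = \beta/u$. Unwinding $w\alpha = 0$ produces some $u'' \in U$ with $u''a\beta = 0$; factoring $u'' = dt$ with $d \in \mathrm{NZD}(A)$, $t \in A \setminus P$ and cancelling $d$ yields $ta\beta = 0$, i.e.\ $w \cdot (\beta/1) = 0$ in $A_P$. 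Since $w$ is a non-zero-divisor in $A_P$, some $v \in A \setminus P$ must kill $\beta$; but $v \in U$, contradicting $\alpha \neq 0$ in $Q(A)_P$.

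The main obstacle I foresee is purely bookkeeping: cleanly identifying $Q(A)_P$ with the localization of $A$ at $U = \mathrm{NZD}(A) \cdot (A \setminus P)$, and keeping track of whether an equality in $Q(A)_P$ is witnessed by an element of $A \setminus P$ or only by a further non-zero-divisor. Conceptually, the hypothesis $\dim Q(A) = 0$ is used exactly once---to force the dichotomy ``unit or zero-divisor'' in $Q(A)_P$---and everything else is formal manipulation of localizations.
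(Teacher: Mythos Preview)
Your proof is correct. The core ring-theoretic content is the same as the paper's, but you organize it differently, and arguably more cleanly.

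The paper argues directly at the level of elements of $M$: given a non-zero-divisor $a/s \in A_P$ and $x/t \in M_P$ with $(a/s)(x/t)=0$, it sets $I=\ker(A\to A_P)$ and shows that $(a,I)Q(A)=Q(A)$ by a prime-avoidance style contradiction using $\dim Q(A)=0$. This produces a non-zero-divisor $b=ra+c$ of $A$ with $c\in I$, and then one checks by hand that a suitable $h\not\in P$ satisfies $hbx=0$, whence $hx=0$ by torsion-freeness of $M$.

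You instead isolate the module-theoretic input once and for all via the characterization $M\hookrightarrow M\otimes_A Q(A)$, localize, and reduce everything to the purely ring-theoretic lemma that every non-zero-divisor of $A_P$ becomes a unit in $Q(A)_P$. Your proof of that lemma and the paper's proof that $(a,I)Q(A)=Q(A)$ are essentially reformulations of one another (both hinge on the fact that in the zero-dimensional ring $Q(A)_P$ every non-unit is a zero-divisor), but your packaging separates the ring-theoretic step from the module-theoretic step, so $M$ never reappears after the first line. The paper's version is more hands-on; yours is more modular and would transplant more easily to other situations where one wants to compare torsion over $A_P$ with torsion over $A$.
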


\begin{proof}
    Let $\alpha = a/s$ be a non-zero divisor in $A_P$, and $\bar{x} = x/t \neq 0 \in M_P$ with $\alpha \bar{x} = 0$. Then, there exists some $h_1 \not\in P$ such that $h_1 a x = 0$. 
    
    We define $I \coloneq \ker(A \to A_P) = \mkset{a \in A}{ha = 0~\text{for some}~h \not\in P}$.

    Assume that $(a, I) Q(A) \neq Q(A)$. Then, there exists a maximal ideal $\ideal{m}$ of $Q(A)$ containing $(a, I)$. Since $\dim Q(A) = 0$, $\ideal{m}$ is minimal, so if we let $P' \coloneq \ideal{m} \cap A \in \spec A$, then $P'$ is a minimal prime ideal containing $(a, I)$. Here, $P' A_{P'}$ is the unique prime ideal of $A_{P'}$, so all elements are nilpotent. Consequently, there exists some $n > 0$ and $b \not\in P'$ such that $ba^n = 0$. Now, since $a/s$ is a non-zero divisor in $A_P$, it follows that $ba^{n-1}/1 = 0 \in A_P$. Repeating this, we get $b/1 = 0 \in A_P$. In other words, $b \in I \subset P'$, which is a contradiction.
    
    Therefore, $(a, I) Q(A) = Q(A)$. This implies that there exists some $b \in (a, I)$ which is a non-zero divisor of $A$. In this case, let $b = ra + c$ for some $r \in A$ and $c \in I$. Since there exists some $h_2 \not\in P$ such that $h_2 c = 0$, for $h \coloneq h_1 h_2 \not\in P$, we have $h b x = 0$. Now, since $b$ is a non-zero divisor of $A$ and $M$ is a torsion-free $A$-module, it follows that $h x = 0$. Thus, $\bar{x} = 0 \in M_P$, and $M_P$ is a torsion-free $A_P$-module.
\end{proof}

In \ref{prop: A reduced and Q(A) v.N.r => tf is local condition}, it remains a question whether the condition on the dimension of the total quotient ring can be weakened. 

\begin{prob}
    Characterize a ring $A$ with the following property: If $M$ is any
torsion free $A$-module and $P\in\spec A$, then $M_P$ is torsion-free $A_P$-module.
\end{prob}

Several examples are presented where the localization of a torsion-free module is not torsion-free. The following example is given in a discussion we had with Ohashi and Matsumoto.

\begin{example}[Noetherian and not reduced Example]\label{ex:Noeth and reduced}
    Let $k$ be a field. Let $\ideal{m} = (x,y,z) \subset k[x,y,z]$, and define $A = k[x,y,z]_{\ideal{m}} /(xy,yz,z^2)$. For $P = (x,y)\subset A$ and $M = A/xA$, the module $M$ is a torsion-free $A$-module, but $M_P$ is not a torsion-free $A_P$-module. Now, a regular element of $A$ is a unit, so $Q(A) = A$ and $\dim Q(A) = 2$.
\end{example}

\ref{ex:Noeth and reduced} is an example in a Noetherian ring. As mentioned earlier, if a ring $A$ is Noetherian, then if it is reduced, then $\dim Q(A) = 0$. Thus, to serve as a counterexample in a Noetherian ring, it must not be reduced. An example in a reduced ring is the following.

\begin{example}[non Noetherian and reduced Example]\label{ex:reduced and dim Q(A) =2}
    Let $k$ be a field. Define the ideals $I$ of the infinite-variable polynomial ring $k\left[X_n\mathrel{}\middle|\mathrel{}n = 1,2,3,\dots\right]$ as:
    \[ I = \left(X_iX_j \mathrel{}\middle|\mathrel{}\text{$i\neq j$ and $\{i,j\}\neq\{1,2\}$}\right).\]
    Minimal ideals of $A$ are as follows:
    \[P_{1,2}\coloneq \left(X_n \mathrel{}\middle|\mathrel{} n\neq 1,2\right),P_i\coloneq \left(X_n \mathrel{}\middle|\mathrel{} n\neq i\right) \text{ for each } i\geq 3.\]
    A regular element of $A$ is a polynomial with non-zero constant term. For $P \coloneq P_{1,2} +(X_2) = \left(X_n \mathrel{}\middle|\mathrel{} n\neq 1\right)$ and $M =  A/X_2A$, the module $M$ is a torsion-free $A$-module, but $M_P$ is not a torsion-free $A_P$-module. This ring $A$ is reduced unlike \ref{ex:Noeth and reduced}, and $\dim Q(A) = 2$.
\end{example}

Note that the converse of \ref{prop: A reduced and Q(A) v.N.r => tf is local condition} does not hold.

\begin{example}
    Let $k$ be a field. Let $\ideal{m} = (x,y) \subset k[x,y]$, and define $A = k[x,y]_{\ideal{m}} /(x^2,xy)$.
    The prime ideals of $A$ are $P = (x)$ and $\ideal{m} = (x,y)$ only. Since $A_\ideal{m} = A$ and $A_P = k(y)$, the regular elements in these rings are units. Therefore, the localization of any torsion-free module is also torsion-free.

    On the other hand, as $A = Q(A)$ and $\dim Q(A) = 1$, this provides a counterexample to the converse of \ref{prop: A reduced and Q(A) v.N.r => tf is local condition}.
\end{example}

We use \ref{prop: A reduced and Q(A) v.N.r => tf is local condition} to characterize rings in which torsion-free modules become flat.

\begin{thm}\label{thm:main}
    Let $A$ be a ring . The following conditions are equivalent:
    \begin{sakura}
		\item $A$ is semi-hereditary ring.
		\item Every torsion-free $A$-module is flat.
    \end{sakura}
\end{thm}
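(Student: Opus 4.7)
The plan is to prove the two implications separately, with the reverse direction being essentially immediate from \ref{thm:semi-here <=> torless is flat}, and the forward direction reducing to the domain case via \ref{prop: A reduced and Q(A) v.N.r => tf is local condition} and \ref{thm:Chase}.

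For (ii)$\Longrightarrow$(i), I would observe that every torsion-less $A$-module is torsion-free, so by assumption every torsion-less $A$-module is flat. Then \ref{thm:semi-here <=> torless is flat} (Chase's theorem) immediately gives that $A$ is semi-hereditary.

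For the main direction (i)$\Longrightarrow$(ii), assume $A$ is semi-hereditary and let $M$ be a torsion-free $A$-module. By Glaz's characterization (\cite{Glaz}*{Corollary 4.2.19}) quoted above, $Q(A)$ is von Neumann regular and every $A_{\ideal{m}}$ is a valuation ring for maximal $\ideal{m}$. Since a von Neumann regular ring has Krull dimension zero, the hypothesis $\dim Q(A) = 0$ of \ref{prop: A reduced and Q(A) v.N.r => tf is local condition} holds. Because flatness is a local property, it suffices to show $M_P$ is flat over $A_P$ for every $P \in \spec A$. I would first verify $A_P$ is itself a valuation ring: it is a further localization of $A_{\ideal{m}}$ for any maximal $\ideal{m} \supset P$, and localizations of valuation rings are valuation rings, hence in particular Pr\"ufer domains. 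Next, by \ref{prop: A reduced and Q(A) v.N.r => tf is local condition}, $M_P$ is torsion-free as an $A_P$-module. Since $A_P$ is a Pr\"ufer domain, \ref{thm:Chase} applies and forces $M_P$ to be flat over $A_P$. This completes the proof.

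The heavy lifting has already been done in \ref{prop: A reduced and Q(A) v.N.r => tf is local condition}, which ensures that localization of torsion-free modules stays torsion-free in this setting; once that is in hand, the argument is a matter of assembly. The only point that deserves a brief verbal check is that $A_P$ is a valuation ring for \emph{every} prime $P$ (not merely maximal ones), which follows because the class of valuation rings is stable under localization. There is no additional obstacle.
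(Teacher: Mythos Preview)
Your proof is correct and follows essentially the same route as the paper: the reverse direction via \ref{thm:semi-here <=> torless is flat}, and the forward direction by localizing, using \ref{prop: A reduced and Q(A) v.N.r => tf is local condition} to preserve torsion-freeness, and then invoking the valuation-ring case. The only cosmetic difference is that the paper checks flatness at maximal ideals $\ideal{m}$ only (which already suffices, and for which $A_{\ideal{m}}$ is a valuation ring directly from Glaz's characterization), so your extra step verifying that $A_P$ is a valuation ring for arbitrary $P$ is unnecessary though harmless.
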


\begin{proof}
    \begin{eqv}
        \item Let $M$ be a torsion-free $A$-module. Since a semi-hereditary ring is reduced and $\dim Q(A) = 0$, by \ref{prop: A reduced and Q(A) v.N.r => tf is local condition}, for any maximal ideal $\ideal{m}$, $M_{\ideal{m}}$ is a torsion-free $A_{\ideal{m}}$-module. Since $A_{\ideal{m}}$ is a valuation ring, by an argument similar to the proof of \ref{thm:Chase}, $M_{\ideal{m}}$ is flat. Therefore, $M$ is also flat.

        \item If $M$ is a torsion-less module, then since $M$ is torsion-free, it follows that $M$ is flat. Therefore, by \ref{thm:semi-here <=> torless is flat}, $A$ is semi-hereditary.
    \end{eqv}
\end{proof}

\section{decomposition theorem for semi-hereditary rings}\label{sec:decomp}

From the above, we have been able to reformulate a semi-hereditary ring in terms of torsion-free modules.  

Using an idea similar to \ref{prop:(tf->f) => reduced}, we can prove that a ring decomposes if there exists a special prime ideal. Since flatness is a local property, note that the following lemma holds.

\begin{lem}
	Let $A$ be a semi-hereditary ring, i.e. a ring with every torsion-free $A$-module is flat. If there exist rings $A_1$ and $A_2$ such that $A \cong A_1 \times A_2$, then they are semi-hereditary.
\end{lem}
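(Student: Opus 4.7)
The plan is to invoke Theorem \ref{thm:main}: $A_1$ is semi-hereditary if and only if every torsion-free $A_1$-module is flat. Write the orthogonal idempotents $e_1 = (1,0)$ and $e_2 = (0,1)$ in $A = A_1 \times A_2$, so that $A_1 \cong A/(e_2)$ and every $A_1$-module becomes naturally an $A$-module on which $e_2$ acts as zero.

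Given a torsion-free $A_1$-module $M_1$, first I would verify that $M_1$, regarded as an $A$-module via the projection, remains torsion-free over $A$. The key observation is that if $(a_1, a_2) \in A$ is a non-zero divisor, then $a_1$ is a non-zero divisor in $A_1$: indeed, $a_1 b_1 = 0$ yields $(a_1, a_2)(b_1, 0) = 0$, forcing $b_1 = 0$. Since the action of $(a_1, a_2)$ on $M_1$ coincides with multiplication by $a_1$, injectivity on $M_1$ follows from the $A_1$-torsion-freeness. By the hypothesis on $A$ and Theorem \ref{thm:main}, $M_1$ is then flat as an $A$-module.

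Next I would descend flatness back to $A_1$. For any $A_1$-module $N$, the canonical comparison map $M_1 \otimes_A N \to M_1 \otimes_{A_1} N$ is an isomorphism, since the $A$-action on both factors factors through $A_1$. Hence tensoring a short exact sequence of $A_1$-modules with $M_1$ over $A_1$ coincides with tensoring over $A$, which is exact by $A$-flatness of $M_1$. Applying Theorem \ref{thm:main} in reverse to $A_1$ yields that $A_1$ is semi-hereditary, and the argument for $A_2$ is symmetric.

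The main point to check carefully is the torsion-free transfer in the second paragraph, together with the tensor-product identification; neither is a genuine obstacle, but both use the product decomposition in an essential way. An alternative route, perhaps closer to the hint that flatness is a local property, would use Glaz's characterization directly: $Q(A) \cong Q(A_1) \times Q(A_2)$, so each factor is a direct factor of a von Neumann regular ring and thus von Neumann regular, while the maximal ideals of $A_i$ correspond to those maximal ideals of $A$ containing $e_j$ ($j \neq i$), with the same localization, a valuation ring by hypothesis.
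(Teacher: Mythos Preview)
Your argument is correct. The paper itself offers no proof beyond the one-line remark ``since flatness is a local property,'' and your main route---pull a torsion-free $A_1$-module back to a torsion-free $A$-module, apply Theorem~\ref{thm:main} to get $A$-flatness, then descend to $A_1$-flatness---is a legitimate way to fill in that gap. The only difference in spirit is the descent step: the paper's phrasing suggests checking flatness prime by prime (each prime of $A_1$ is a prime of $A$ with the same localization), whereas you use the tensor-product identification $M_1\otimes_A N \cong M_1\otimes_{A_1} N$; these are interchangeable here. Your closing alternative via Glaz's characterization is also valid and is probably the argument closest to what the paper's hint literally points at.
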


\begin{prop}
	Let $A$ be a semi-hereditary ring. Suppose that there exists a prime ideal $P\in\spec A$ such that every element of $P$ is a zero divisor. Then, $P^2 = P$. Furthermore, if $P$ is finitely generated, the following holds. 
	\begin{sakura} 
		\item There exists an idempotent element $e\in P$ such that $P = (e)$.
		\item $A$ decomposes as $A\cong A/eA \times A/(1-e)A$. 
		\item $A/eA$ is a Pr\"ufer domain, and $A/(1-e)A$ is semi-hereditary.

	\end{sakura}
\end{prop}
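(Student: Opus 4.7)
The plan is to first deduce $P^2 = P$ from the main theorem \ref{thm:main} via torsion-freeness of $A/P$, then produce an idempotent generator of $P$ using Nakayama's lemma, and finally combine the preceding decomposition lemma with the equivalence "semi-hereditary domain $\Longleftrightarrow$ Pr\"ufer domain" recorded in \S\ref{sec:Integral domaincases}.

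The key observation is that the hypothesis on $P$ forces every non-zero-divisor of $A$ to lie outside $P$. Consequently, if $a \in A$ is a non-zero-divisor and $a\bar{x} = 0$ in $A/P$, then $ax \in P$ with $a \notin P$ gives $x \in P$ by primality, so $A/P$ is a torsion-free $A$-module, hence flat by \ref{thm:main}. Tensoring the short exact sequence $0 \to P \to A \to A/P \to 0$ with $A/P$ then yields an injection $P/P^2 \cong P \otimes_A A/P \hookrightarrow A/P$, and since the composite map sends $p \otimes \bar{x}$ to $\overline{px} = 0$, we conclude $P/P^2 = 0$. This is the step where the zero-divisor hypothesis is genuinely used; without it some non-zero-divisor might land in $P$ and obstruct the argument that $A/P$ is torsion-free. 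I expect this to be the main conceptual point.

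Now assume $P$ is finitely generated. Applying Nakayama's lemma in determinant form (\cite{Matsumura1986}*{Theorem 2.2}) to the finitely generated module $P$ with $P \cdot P = P$ gives $e \in P$ with $(1-e)P = 0$; in particular $e = e^2$, and every $p \in P$ satisfies $p = ep \in eA$, so $P = eA$ and $e$ is idempotent, proving (i). The relations $e + (1-e) = 1$ and $eA \cap (1-e)A = 0$ (any element of the intersection is both killed and fixed by $e$) then give $A \cong A/eA \times A/(1-e)A$ by the Chinese Remainder Theorem, which is (ii). For (iii), the preceding lemma shows that both factors are semi-hereditary, and since $A/eA = A/P$ is a domain (as $P$ is prime), the Pr\"ufer-versus-semi-hereditary-domain equivalence finishes the proof. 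The remaining calculations are routine once $P^2 = P$ is established.
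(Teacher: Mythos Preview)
Your proof is correct and follows essentially the same route as the paper: show $A/P$ is torsion-free, hence flat by \ref{thm:main}, deduce $P/P^2=0$ from the vanishing of the (injective) map $P\otimes_A A/P\to A/P$, then apply NAK and the preceding lemma. Your derivation of the idempotent in (i) is in fact slightly cleaner than the paper's: from $(1-e)P=0$ with $e\in P$ you read off $e^2=e$ and $P=eA$ directly, whereas the paper first writes $P=aA$, then invokes $P=P^2$ again to manufacture $e=ab$, an unnecessary detour.
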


\begin{proof}
	Since all elements of $P$ are zero divisors, $A/P$ is a torsion-free $A$-module. Thus, $A/P$ is flat, just like \ref{prop:(tf->f) => reduced},  the natural map $P \otimes_A A/P \to A/P$ is injective and zero. Therefore, $P \otimes A/P = P/P^2 = 0$.

	Now, suppose $P$ is finitely generated.
	\begin{sakura}
		\item By NAK \cite{Matsumura1986}*{Theorem 2.2}, there exists some $a \in P$ such that $(a + 1)P = 0$. Thus, $P = aA$. Since $P = P^2$, there exists some $b \in A$ such that $a = a^2b$. Letting $e = ab$, we see that $e$ is idempotent and $P = aA = eA$.
		
		\item Let $A\to A/eA \times A/(1-e)A; a\mapsto (a+eA, a+(1-e)A)$ and $A/eA \times A/(1-e)A\to A;(b+eA,c+(1-e)A)\mapsto b(1-e)+ce$ be homomorphisms that are inverses of each other.

        \item From the lemma, $A/eA$ and $A/(1-e)A$ are semi-hereditary. Here, since $A/eA = A/P$ is a domain, $A/eA$ is a Prüfer domain by \ref{thm:Chase}.
	\end{sakura}
\end{proof}

For a ring $A$, it is important to note that there always exists a minimal prime ideal, and all elements of the minimal prime ideal are zero divisors. Moreover, according to \cite{Dutton1978}*{Proposition 4,5}, if the minimal prime ideal $P$ is finitely generated, then $P$ becomes an associated prime, meaning that there exists some $a \neq 0 \in A$ such that $P = \ann(a)$.

\begin{cor}\label{cor:min primes is finite => A is a fin. prod. of Prufer}
	Let $A$ be a semi-hereditary ring. If every minimal prime ideal of $A$ is finitely generated, then $A$ is a finite product of Pr\"ufer domains.
\end{cor}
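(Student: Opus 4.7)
The plan is to first establish that $A$ has only finitely many minimal primes, then iterate the preceding proposition on them. As a preliminary observation, every minimal prime $P$ of $A$ consists of zero divisors (for $a \in P$ the image $a/1$ is nilpotent in $A_P$, yielding some $s \notin P$ with $sa^n = 0$, so $a$ is a zero divisor); combined with the hypothesis that $P$ is finitely generated, the preceding proposition furnishes for each minimal prime $P_\alpha$ an idempotent $e_\alpha \in A$ with $P_\alpha = e_\alpha A$.

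For the finiteness, I would form the ideal $J = \sum_{\alpha}(1-e_\alpha)A$ and argue $J = A$. If $J$ were proper it would lie in a maximal ideal $\mathfrak m$, which in turn contains some minimal prime $P_{\alpha_0} = e_{\alpha_0}A$; but then $e_{\alpha_0} \in \mathfrak m$ and $1 - e_{\alpha_0} \in J \subseteq \mathfrak m$ together give $1 \in \mathfrak m$, a contradiction. Thus $1 = \sum_{i=1}^{n} c_i (1-e_{\alpha_i})$ for a finite collection of indices $\alpha_1,\ldots,\alpha_n$. For any minimal prime $P_\beta$, the image of each $e_{\alpha_i}$ in the domain $A/P_\beta$ is an idempotent, hence $0$ or $1$: in the former case $P_{\alpha_i} = e_{\alpha_i} A \subseteq P_\beta$ and minimality forces $P_{\alpha_i} = P_\beta$; in the latter $1 - e_{\alpha_i} \in P_\beta$. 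If $\beta \notin \{\alpha_1,\ldots,\alpha_n\}$ the latter alternative holds for every $i$, giving $1 \in P_\beta$, which is absurd. Hence $\{P_{\alpha_1},\ldots,P_{\alpha_n}\}$ exhausts the minimal primes.

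With finiteness in hand, I would induct on the number $n$ of minimal primes. When $n = 1$, reducedness of the semi-hereditary ring $A$ forces its unique minimal prime to be $0$, so $A$ is a semi-hereditary domain and hence a Pr\"ufer domain by \ref{thm:Chase}. For $n \geq 2$, apply the preceding proposition to one minimal prime $P_1 = e_1 A$ to obtain $A \cong A/P_1 \times A/(1-e_1)A$, in which $A/P_1$ is a Pr\"ufer domain and, by the preceding lemma, $A/(1-e_1)A$ is semi-hereditary. Its minimal primes are exactly the images of the minimal primes of $A$ other than $P_1$ (each such $P$ contains $1-e_1$ because $e_1(1-e_1) = 0 \in P$ and $e_1 \notin P$, the latter by minimality) and remain finitely generated, so the induction hypothesis applies and completes the decomposition.

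The main obstacle is the finiteness of the minimal spectrum, which is not obvious from the hypothesis on individual minimal primes. The crucial device is the partition-of-unity relation $1 = \sum c_i (1-e_{\alpha_i})$ extracted from the complementary idempotents: any minimal prime one might hope to miss must interact with one of the already-chosen $e_{\alpha_i}$. Once finiteness is established, the rest is a short induction using the preceding proposition and lemma together with \ref{thm:Chase}.
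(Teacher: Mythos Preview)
Your argument is correct. The inductive step is essentially the same iteration the paper performs, but your treatment of the finiteness of the minimal spectrum is genuinely different from the paper's.

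The paper simply invokes \cite{Anderson1994}: in any commutative ring in which every minimal prime is finitely generated, there are only finitely many minimal primes. This is a general fact, proved by a compactness/avoidance argument, and has nothing to do with the semi-hereditary hypothesis. You instead exploit the extra structure already won from the preceding proposition, namely that each minimal prime is generated by an idempotent $e_\alpha$; your partition-of-unity trick with the complementary idempotents $1-e_\alpha$ then forces finiteness directly. Your route is self-contained and illustrates that, in this setting, finiteness is an immediate consequence of the idempotent description rather than a separate ring-theoretic input; the paper's route is shorter on the page but imports a result of independent strength. After finiteness, both proofs reduce to the same inductive splitting via the preceding proposition and lemma, together with \ref{thm:Chase} for the domain factors.
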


\begin{proof}
	Under this assumption, by \cite{Anderson1994}, the minimal prime ideals are finite in number. Since each decomposition of the ring $A$ as $A \cong A/P \otimes A/(1-e)A$ using the minimal prime $P$ decreases the number of minimal primes, this decomposition can occur at most finitely many times.
\end{proof}

From the above we can conclude that in the case of Noetherian rings, semi-hereditary rings are expressed as finite products of Dedekind domains. Before giving the proof, we will briefly summarise what is known about the relations between different classes.

\begin{lem}
	Let $A$ be a ring. The following conditions are equivalent:
	\begin{sakura}
		\item $A$ is a Noetherian Pr\"ufer domain;
		\item $A$ is a hereditary domain;
		\item $A$ is a Dedekind domain.
	\end{sakura}
\end{lem}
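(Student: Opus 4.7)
The plan is to establish the cyclic chain (iii) $\Longrightarrow$ (i) $\Longrightarrow$ (ii) $\Longrightarrow$ (iii).

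The implication (iii) $\Longrightarrow$ (i) is essentially immediate from the standard definition of a Dedekind domain: a Noetherian integrally closed domain of Krull dimension at most one localizes at each nonzero prime to a DVR, which is in particular a valuation ring, so $A$ is Pr\"ufer as well as Noetherian. For (i) $\Longrightarrow$ (ii), if $A$ is a Noetherian Pr\"ufer domain, then every ideal of $A$ is finitely generated by the Noetherian hypothesis and hence projective by \ref{prop:cond. of Prufer}; thus every ideal of $A$ is projective, which is the definition of $A$ being hereditary.

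The main work lies in (ii) $\Longrightarrow$ (iii). A hereditary domain is in particular semi-hereditary, so it is Pr\"ufer by \ref{prop:cond. of Prufer}; it remains to show $A$ is Noetherian. The heart of the argument is the classical fact that over a domain, every projective ideal is finitely generated. I would obtain this via the dual basis lemma: given a nonzero projective ideal $I$, choose $\{(a_i, f_i)\}_{i \in \Lambda}$ with $a_i \in I$, $f_i \in \hom(I, A)$, and $y = \sum_i f_i(y) a_i$ with finite support for each $y \in I$. For any $y, z \in I$, applying $A$-linearity of $f_i$ to $yz$ in two ways yields $y f_i(z) = z f_i(y)$; hence, working in the fraction field $K$ of $A$, the quotient $\alpha_i \coloneq f_i(y)/y \in K$ is independent of the nonzero $y \in I$, and $f_i(y) = \alpha_i y$ for every $y \in I$. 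Fixing one nonzero $y_0 \in I$ and writing $y_0 = \sum_{i \in S} f_i(y_0) a_i$ with $S \subset \Lambda$ finite, we obtain $1 = \sum_{i \in S} \alpha_i a_i$ in $K$; then for any $z \in I$ we have $z = \sum_{i \in S} (\alpha_i z)\, a_i = \sum_{i \in S} f_i(z)\, a_i$, so $I = \sum_{i \in S} A a_i$ is finitely generated. Combined with the Pr\"ufer property, this gives $A$ Noetherian and Pr\"ufer, i.e., Dedekind.

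The principal obstacle is the dual basis manipulation showing that projective ideals in a domain are finitely generated; the subtle point is to exploit the commutativity of the product $yz$ in $A$ to extract a single scalar $\alpha_i \in K$ representing $f_i$, since a priori the index set $\Lambda$ may be very large. Once this is in hand, the remaining implications are routine bookkeeping against \ref{prop:cond. of Prufer} and the standard characterization of Dedekind domains.
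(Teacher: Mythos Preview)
Your proof is correct and follows the same cyclic scheme as the paper. The steps (iii) $\Rightarrow$ (i) and (i) $\Rightarrow$ (ii) are essentially identical to the paper's (the paper phrases (i) $\Rightarrow$ (ii) as ``finitely presented and flat, hence projective'', which is the same content as your appeal to \ref{prop:cond. of Prufer}).

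The only real difference is in (ii) $\Rightarrow$ (iii). The paper simply cites \cite{Matsumura1986}*{Theorem 11.3} to the effect that in a domain a nonzero ideal is projective if and only if it is invertible, and then invokes the characterization of Dedekind domains as those in which every nonzero ideal is invertible. Your dual-basis computation is precisely an unpacking of that citation: the elements $\alpha_i \in K$ you extract lie in $I^{-1}$, and the identity $1 = \sum_{i \in S} \alpha_i a_i$ is exactly the statement $I\,I^{-1} = A$, with finite generation of $I$ falling out as a by-product. So your argument is not a different route so much as a self-contained proof of the lemma the paper outsources; it buys independence from the reference at the cost of a paragraph, while the paper's version is shorter but leans on Matsumura.
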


\begin{proof}
	\begin{eqv}[3]
		\item Any ideal of $A$ is finitely presented and flat, so it is projective. Therefore, $A$ is hereditary.
		\item Any ideal of $A$ is projective.  Since $A$ is a domain, (fractional) ideal of $A$ is projective if and only if invertible (see \cite{Matsumura1986}*{Theorem 11.3}). So $A$ is a Dedekind domain.
        \item Since a Dedekind domain is Noetherian and every ideal of $A$ is projective, so $A$ is a Pr\"ufer domain.
	\end{eqv}
\end{proof}

\begin{cor}
	Let $A$ be a Noetherian ring. The following conditions are equivalent:
	\begin{sakura}
		\item $A$ is a semi-hereditary ring.
		\item $A$ is a finite direct product of Dedekind domains.
		\item $A$ is a hereditary ring.
	\end{sakura}
\end{cor}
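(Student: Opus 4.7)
The plan is to run a cycle (iii) $\Rightarrow$ (i) $\Rightarrow$ (ii) $\Rightarrow$ (iii), with essentially all of the substance packaged into \ref{cor:min primes is finite => A is a fin. prod. of Prufer} and the preceding lemma characterizing Dedekind domains. The implication (iii) $\Rightarrow$ (i) is immediate from the definitions, since projectivity of every ideal of $A$ specializes to projectivity of every finitely generated ideal.

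For (i) $\Rightarrow$ (ii), I would invoke the Noetherian hypothesis in the simplest possible way: every ideal of $A$ is finitely generated, and in particular every minimal prime ideal of $A$ is finitely generated. Hence \ref{cor:min primes is finite => A is a fin. prod. of Prufer} applies and yields a finite decomposition $A \cong B_1 \times \cdots \times B_n$ in which each $B_i$ is a Pr\"ufer domain. Each $B_i$ is a quotient of the Noetherian ring $A$, hence itself Noetherian, and so the preceding lemma identifies each $B_i$ as a Dedekind domain.

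For (ii) $\Rightarrow$ (iii), every Dedekind domain is hereditary (again by the preceding lemma), and hereditariness is preserved under finite direct products: any ideal of $B_1 \times \cdots \times B_n$ splits as $J_1 \times \cdots \times J_n$ with $J_i$ an ideal of $B_i$, and a finite direct sum of projective modules over the respective factors is projective over the product ring. This completes the cycle.

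The step carrying all of the weight is (i) $\Rightarrow$ (ii), and even there the real difficulty has already been absorbed into \ref{cor:min primes is finite => A is a fin. prod. of Prufer}. No genuine obstacle arises here; the only point requiring a moment's thought is confirming that the factors $B_i$ inherit the Noetherian property (which is automatic) and that the product decomposition of ideals behaves well with respect to projectivity (a routine exercise with product rings).
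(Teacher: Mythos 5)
Your proposal runs the same cycle as the paper, with the same key ingredients: reduce (i) to \ref{cor:min primes is finite => A is a fin. prod. of Prufer} via finite generation of minimal primes, note the factors inherit Noetherianness, and invoke the preceding lemma to identify Noetherian Pr\"ufer domains with Dedekind domains; the remaining implications are handled exactly as in the paper. This matches the paper's argument in substance and structure, so it is correct.
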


\begin{proof}
	\begin{eqv}[3]
	   \item By \ref{cor:min primes is finite => A is a fin. prod. of Prufer}, $A$ is a finite product of Prüfer domains. Now, from the construction, each Prüfer domain component of $A$ is Noetherian. In a Noetherian Prüfer domain, every ideal is finitely presented and flat, hence projective. Therefore, $A$ is a finite product of hereditary domains. Since hereditary domains are equivalent to Dedekind domains, $A$ is a finite product of Dedekind domains.	
        \item A direct product of hereditary rings is also hereditary.
        \item It is obvious.
	\end{eqv}
\end{proof}

\section{Some topics in perfectoid ring theory}\label{sec:perfectoid}

It goes without saying that the absolute integral closure plays an extremely important role in perfectoid ring theory.

\begin{defi}
    Let $A$ be an integral domain with the field of fractions $Q(A)$. Let the algebraic closure of $Q(A)$ be denoted by $\overline{Q(A)}$. The integral closure of $A$ in $\overline{Q(A)}$ is denoted by $A^+$ and is called the absolute integral closure of $A$.
\end{defi}

It can be seen that the absolute integral closure of the valuation ring is an example of a Pr\"ufer domain.

\begin{prop}[\cite{Bourbaki}*{Chap.VI, \S 8.6, Prop.6}]\label{prop:A^+ is prufer}
    Let $A$ be a valuation ring with a valuation $v$ on a field $K$, $L$ an algebraic extension of $K$ and $A'$ the integral closure of $A$ in $L$. There is a one-to-one correspondence between the valuation rings of valuations on $L$ that extend $v$ and the localizations ${A'}_{\ideal{m}}$ at each maximal ideal $\ideal{m}$ of $A'$. In particular, $A'$ is Pr\"ufer.
\end{prop}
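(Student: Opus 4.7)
The plan is to establish the claimed bijection between valuation overrings of $A$ in $L$ and maximal ideals of $A'$, and then to deduce the Pr\"ufer property in the sense defined in the paper. I would split the argument into three movements: the forward map, the backward map, and the Pr\"ufer conclusion.

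For the forward direction, given a valuation ring $B$ of $L$ whose restriction to $K$ is $A$, I would first observe that valuation rings are integrally closed in their fraction fields, which forces $B\supseteq A'$. Writing $\ideal{n}$ for the maximal ideal of $B$, set $\ideal{m}\coloneq \ideal{n}\cap A'$. Since $B$ dominates $A$, the contraction $\ideal{m}\cap A$ equals the maximal ideal of $A$, and integrality of $A\subseteq A'$ then forces $\ideal{m}$ to be maximal in $A'$. The assignment $B\mapsto \ideal{n}\cap A'$ thus lands in the set of maximal ideals of $A'$.

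For the backward direction, given a maximal ideal $\ideal{m}$ of $A'$, I would invoke the standard existence theorem (via Zorn's lemma) to produce a valuation ring $B$ of $L$ dominating the local ring $A'_\ideal{m}$. Domination forces $B\cap K=A$, and the maximal ideal of $B$ contracts to $\ideal{m}$ on $A'$. The bijection then reduces to the identity $B=A'_\ideal{m}$. The inclusion $A'_\ideal{m}\subseteq B$ is immediate from domination. For the reverse, I would pick $x\in B$, write down a minimal polynomial $P(T)\in K[T]$ for $x$, rescale the coefficients so that they lie in $A$ with at least one of them a unit of $A$, and then manipulate the resulting integral relation (isolating the term with the highest-index unit coefficient) to express $x=a/s$ with $a\in A'$ and $s\in A'\setminus\ideal{m}$. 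Equivalently, one can invoke the Bourbaki lemma $A'=\bigcap B'$ as $B'$ ranges over all valuation overrings of $A$ in $L$, combined with the minimality of $B$ under domination of $A'_\ideal{m}$. This is the technical heart of the argument and the step I expect to be the main obstacle.

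With the bijection established, each $A'_\ideal{m}$ is a valuation ring of $L$. To conclude that $A'$ is Pr\"ufer in the sense of the paper, I would take any $\ideal{p}\in\spec A'$, pick a maximal ideal $\ideal{m}\supseteq\ideal{p}$, and observe that $A'_\ideal{p}$ is a further localization of the valuation ring $A'_\ideal{m}$; since the localization of a valuation ring at any prime is again a valuation ring, the proof is complete.
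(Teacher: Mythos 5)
The paper does not give a proof of this proposition: it is stated as a citation to Bourbaki (Chap.~VI, \S 8.6, Prop.~6), so there is no in-paper argument to compare against. Your outline is, correctly, essentially the Bourbaki argument. The forward direction (an integrally closed valuation ring $B$ of $L$ lying over $A$ contains $A'$; the contraction of its maximal ideal to $A'$ is maximal by lying-over for integral extensions) and the Pr\"ufer conclusion (each $A'_{\ideal{p}}$ is a further localization of a valuation overring $A'_{\ideal{m}}$, and localizations of valuation rings at primes are valuation rings) are clean and complete. The one gap you flag yourself --- the reverse inclusion $B\subseteq A'_{\ideal{m}}$ --- is indeed the crux. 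Your primary sketch (rescale the minimal polynomial so its coefficients lie in $A$ with at least one unit, then manipulate) is the right Bourbaki-style move, but it is left unfinished; to make it airtight you should show that for every $x\in L^\times$ either $x\in A'_{\ideal{m}}$ or $x^{-1}\in A'_{\ideal{m}}$, which establishes directly that $A'_{\ideal{m}}$ is a valuation ring, and then $B=A'_{\ideal{m}}$ follows because valuation rings are maximal under domination. The alternative you float does not obviously close the gap: the identity $A'=\bigcap B'$ gives information about $A'$, not $A'_{\ideal{m}}$, and the phrase ``minimality of $B$ under domination of $A'_{\ideal{m}}$'' is not a property valuation rings enjoy --- they are maximal, not minimal, under domination among local subrings of $L$ --- so that route as stated does not yield the needed containment.
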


Also, by virtue of a valuation ring $A$ itself being a Pr\"ufer domain, it can be shown that $A \to A^+$ is faithfully flat. Although this is well known, let us provide a proof.

\begin{prop}\label{prop:A -> A^+ is f.f.}
    Let $A$ be a valuation ring and let $A^+$ be the absolute integral closure of $A$. The natural map $A\to A^+$ is faithflly flat.
\end{prop}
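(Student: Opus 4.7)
The plan is to split the proof into two parts: flatness and surjectivity on spectra.

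For flatness, I would invoke the chain of results that has just been assembled. Since $A$ is a valuation ring it is a Pr\"ufer domain, and $A^+$ is an overring of $A$ inside the algebraic closure $\overline{Q(A)}$, so in particular $A^+$ is an $A$-submodule of a $Q(A)$-vector space, hence torsion-free as an $A$-module. Applying Chase's theorem (\ref{thm:Chase}), every torsion-free module over the Pr\"ufer domain $A$ is flat, so $A^+$ is flat over $A$. (Alternatively, one could cite \ref{prop:A^+ is prufer} to note $A^+$ itself is Pr\"ufer, but this is not needed for the flatness step.)

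For faithful flatness it remains, given that $A^+$ is already flat, to show that $\mathfrak{m} A^+ \neq A^+$ for every maximal ideal $\mathfrak{m}$ of $A$. Since $A$ is a valuation ring it is local, so there is only one maximal ideal $\mathfrak{m}$ to check. The extension $A \hookrightarrow A^+$ is integral by the definition of the absolute integral closure, so by the Lying-Over theorem there exists a prime $\mathfrak{q} \in \mathrm{Spec}(A^+)$ with $\mathfrak{q} \cap A = \mathfrak{m}$. In particular $\mathfrak{m} A^+ \subseteq \mathfrak{q} \subsetneq A^+$, so the Spec map is surjective onto the unique closed point of $\mathrm{Spec}(A)$.

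Combining the two steps, $A \to A^+$ is flat and the induced map on spectra is surjective, which is one of the standard equivalent formulations of faithful flatness. No step here looks genuinely difficult: the conceptual content is supplied by \ref{thm:Chase}, and the only other tool used is lying-over for integral extensions. The mild subtlety, if any, is keeping clear that torsion-freeness of $A^+$ as an $A$-module is what lets Chase's theorem apply, rather than any intrinsic property of $A^+$ itself.
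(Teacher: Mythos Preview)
Your proof is correct. The flatness step matches the paper exactly: $A^+$ is torsion-free as an $A$-module, $A$ is Pr\"ufer, and you invoke \ref{thm:Chase}. Your justification of torsion-freeness (submodule of the field $\overline{Q(A)}$) is in fact cleaner than the paper's, which runs an explicit minimal-polynomial argument to reach the same conclusion.

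The genuine difference is in the faithful-flatness step. The paper appeals to \ref{prop:A^+ is prufer}: it picks a maximal ideal $\mathfrak{m}'$ of $A^+$, uses the Bourbaki result that $A^+_{\mathfrak{m}'}$ is the valuation ring of an extension $v'$ of $v$, and then computes $v'(a\alpha) = v(a) + v'(\alpha) > 0$ for $a \in \mathfrak{m}$ to conclude $\mathfrak{m} A^+ \neq A^+$. You instead use only that $A \hookrightarrow A^+$ is integral and invoke Lying-Over to produce a prime over $\mathfrak{m}$. Your route is more elementary and avoids the dependence on \ref{prop:A^+ is prufer}; the paper's route, on the other hand, ties the argument back to the valuation-theoretic description of $A^+$ that it has just set up, which is thematically coherent with the surrounding discussion even if it is a heavier tool than strictly necessary.
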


\begin{proof}
    The map $A \to A^+$ is torsion-free. This is because, for any $a \neq 0$ in $A$ and $\alpha \in A^+$, there exist elements $c_1, \dots, c_r \in A$ such that
    \[\alpha^r + c_1\alpha^{r-1}+\dots+c_r = 0\]
    Here, let $r$ be the smallest degree of such a monic polynomial. By multiplying this equation by $a$, it follows that $c_r = 0$, which gives $\alpha (\alpha^{r-1} + c_1 \alpha^{r-2} + \dots + c_{r-1}) = 0$. By the minimality of $r$, we conclude that $\alpha = 0$.

    Since $A$ is also a Pr\"ufer domain, $A^+$ is flat over $A$.

    Now, let $v$ be a valuation of $A$ on the field $K$. Then, for any maximal ideal $\ideal{m}'$ of $A^+$, by \ref{prop:A^+ is prufer}, there exists an extension $v'$ of $v$ to an algebraic extension $K'$ of $K$, along with a valuation ring $A'$ such that $A' = A^+_{\ideal{m}'}$. Then, for any maximal ideal $\ideal{m}$ of $A$, it must hold that for any $a \in \ideal{m}$ and $\alpha \in A^+$, we have $v'(a\alpha) = v(a)+v'(\alpha)> 0$. Consequently, $A^+ \neq \ideal{m} A^+$, and hence $A \to A^+$ is faithfully flat.
\end{proof}

By Kunz's theorem \cite{Kunz}, the importance of the flatness of the Frobenius map is widely recognized, and \cite{Lurie} provides an equivalent condition for the flatness of the Frobenius map $A/\varpi A \to A/\varpi^p A$ when $A$ is Noetherian and $\varpi$ is a regular element for which $\varpi^p$ divides $p$. Inspired by this discussion, \cite{shimomoto} examines the flatness of the Frobenius map in the case of valuation rings, leveraging \ref{prop:A -> A^+ is f.f.}.

\begin{prop}[\cite{shimomoto}*{Proposition 7.12.}]
    Let $A$ be a ring and let $p>0$ be a prime number. Assume that $\varpi\in A$ is an element such that $A/bA$ is a ring of characteristic $p>0$, and assume that there is a unit $u\in A^\times$ suth that $\varpi^p = bu$. Assume there is a faithfully flat $A$-algebra $B$ such that the Frobenius map $B/bB \to B/bB$ is surjective with kernel being equal to $(\varpi)$. Then the Frobenius map $A/\varpi A\to A/pA$ is flat. In paticular, if $A$ is a valuation ring, then the Frobenius map is flat. 
\end{prop}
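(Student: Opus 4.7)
The plan is to carry out faithfully flat descent along $A \to B$: the two hypotheses on $B$ force the Frobenius on $B$ to be an isomorphism $B/\varpi B \xrightarrow{\sim} B/bB$, which trivializes the statement over $B$, and flatness of the Frobenius over $A$ then descends.

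First I would unpack the hypotheses on $B$. Since $\varpi^p = bu$ with $u \in A^{\times}$, we have $b \in \varpi A$, and hence $(\varpi, b)B = (\varpi)B$. The surjectivity of $F \colon B/bB \to B/bB$ together with $\ker F = (\varpi)$ therefore factors through an isomorphism $\bar F \colon B/\varpi B \xrightarrow{\sim} B/bB$, $x \mapsto x^p$. Next I would assemble the naturality square whose horizontal arrows are the Frobenius maps $F \colon A/\varpi A \to A/bA$ and $\bar F \colon B/\varpi B \to B/bB$, and whose vertical arrows are the base changes of $A \to B$; the verticals are faithfully flat because faithful flatness is stable under base change, and the bottom horizontal is the isomorphism just obtained. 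Hence the composition $A/\varpi A \to B/\varpi B \xrightarrow{\bar F} B/bB$ is faithfully flat, and by commutativity of the square it coincides with $A/\varpi A \xrightarrow{F} A/bA \to B/bB$. Since $A/bA \to B/bB$ is faithfully flat, the standard descent principle ``if $M \to N$ is faithfully flat and $N$ is flat over $R$, then $M$ is flat over $R$'' forces the Frobenius $F$ itself to be flat.

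For the ``in particular'' clause I would take $B = A^+$ when $A$ is a valuation ring, so that faithful flatness of $A \to A^+$ is exactly \ref{prop:A -> A^+ is f.f.}. Surjectivity of Frobenius on $A^+/bA^+$ is immediate because $A^+$ is absolutely integrally closed, so every element admits a $p$-th root. For the kernel, I would use that $A^+$ is a domain: if $x^p \in bA^+ = \varpi^p A^+$, write $x^p = \varpi^p y$ with $y \in A^+$, then extract a $p$-th root of $y$ in $A^+$ to conclude $x \in \varpi A^+$. The only genuine subtlety I anticipate is verifying at the outset that the Frobenius really descends to a well-defined ring map $A/\varpi A \to A/bA$, i.e.\ that the middle binomial terms in $(x+\varpi a)^p$ lie in $\varpi^p A$; this reduces to $p \in \varpi^{p-1} A$, which follows from $p \in bA = \varpi^p A$, and I would record it as a preliminary remark before opening the descent argument.
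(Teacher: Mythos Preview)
The paper does not supply its own proof of this proposition; it is quoted from \cite{shimomoto}, and the surrounding discussion only indicates that the valuation-ring case is meant to run through $B = A^{+}$ via \ref{prop:A -> A^+ is f.f.}. Your faithfully-flat-descent argument is correct and matches that intended route. The square of Frobenii commutes, the verticals are faithfully flat base changes of $A \to B$, the bottom map is an isomorphism by the hypothesis on $B$, and the descent principle you invoke (if $M \to N$ is faithfully flat as rings and $N$ is $R$-flat, then $M$ is $R$-flat) is valid: for an injection $X \hookrightarrow Y$ of $R$-modules, tensoring the kernel of $M \otimes_R X \to M \otimes_R Y$ with $N$ over $M$ identifies it with the kernel of $N \otimes_R X \to N \otimes_R Y$, which vanishes, and faithful flatness then kills the original kernel. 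You have also correctly read the target of the Frobenius as $A/bA$ rather than the $A/pA$ printed in the statement, since only the former is well-defined from $\varpi^{p} = bu$.

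One small gap in the ``in particular'' clause: from $x^{p} = \varpi^{p} y$ with $y \in A^{+}$, extracting a $p$-th root $z$ of $y$ gives $x^{p} = (\varpi z)^{p}$, but in a mixed-characteristic domain this does \emph{not} by itself force $x = \varpi z$. The clean fix is to argue directly that (for $\varpi \neq 0$, the case $\varpi = 0$ being trivial) the element $x/\varpi \in \overline{Q(A)}$ satisfies $(x/\varpi)^{p} = y \in A^{+}$, hence is integral over $A$ and therefore lies in $A^{+}$; thus $x \in \varpi A^{+}$ as required.
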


In this context, the proof that the absolute integral closure of a valuation ring is a faithfully flat $A$-algebra (\ref{prop:A -> A^+ is f.f.}) relies on the fact that torsion-free modules over a valuation ring are flat. This leads to the proposal of the following problem.

\begin{prob}[\cite{shimomoto}*{Open Problem 7.13.}]
    Characterize a ring $A$ with the following property: If $M$ is any torsion-free $A$-module, then $M$ is flat.
\end{prob}

This problem is solved by \ref{thm:main} in this paper. This property is equivalent to $A$ being semi-hereditary.

The following problems are also related.

\begin{prob}[\cite{ISHIRO2022106898}*{Question 1.}]
    Let $A$ be a ring of mixed characteristic $p>0$. Assume that there is an element $\varpi \in A$ together with an element $u\in A^\times$ such that $\varpi^p = pu$. Characterise a ring $A$ such that the Frobenius map $A/\varpi A \to A/pA$ is faithfully flat, other than the case $A$ is a regular ring, a valuation ring, or a ring whose $p$-adic completion is an integral perfectoid ring.
\end{prob}

\begin{prob}[\cite{shimomoto}*{Open Problem 7.11.}]
    Let $A$ be a ring and let $p>0$ be a prime number. Assume that $\varpi\in A$ is an element such that $A/bA$ is a ring of characteristic $p>0$, and assume that there is a unit $u\in A^\times$ suth that $\varpi^p = bu$. Characterise a ring $A$ such that the Frobenius map $A/\varpi A \to A/bA$ is flat.
\end{prob}

\subsection*{Acknowledgments}
I would like to thank Hisanori Ohashi and Yuya Matsumoto for attending the seminar and giving me various suggestions. I would also like to express my deepest gratitude to my company, SKILLUP NeXt Ltd, and its members.

\begin{bibdiv}
	\begin{biblist}
\bib{AMM}{article}{
	author      ={Antieau, B.},
	author      ={Mathew, A.},
	author      ={Morrow, M.},
	title       ={The K-theory of perfectoid rings},
	journal     ={Doc. Math.},
	volume      ={27},
	year        ={2022},
	pages       ={1923--1951}
}
\bib{Anderson1994}{article}{
	author      ={Anderson, D. D.},
	title       ={A note on minimal prime ideals},
	journal     ={Proc. Amer. Math. Soc.},
	year        ={1994},
	pages       ={13--14},
	volume      ={122},
	doi         ={10.1090/S0002-9939-1994-1191864-2},
}
\bib{Bourbaki}{book}{
    title       ={Commutative Algebra: Chapters 1-7},
    author      ={Bourbaki, N.},
    year        ={1989},
    publisher   ={Springer Berlin, Heidelberg},
}
\bib{Chase}{article}{
	author      ={Chase, S. U.},
	title       ={Direct products of modules},
	journal     ={Trans. Amer. Math. Soc.},
	year        ={1960},
	volume      ={97},
	number      ={3},
	pages       ={457--473},
}
\bib{Drozd1980}{article}{
  author    = {Drozd,Yu. A.},
  title     = {The structure of hereditary rings},
  journal   = {Mat. Sb.},
  volume    = {113(155)},
  year      = {1980},
  note      = {English translation: Math. Sbornik (USSR), 41, No. 1, 139--148 (1982)}
}
\bib{Dutton1978}{article}{
	author      ={Dutton, P.},
	title       ={Prime ideals attached to a module},
	journal     ={The Quarterly Journal of Mathematics},
	volume      ={29},
	number      ={4},
	pages       ={403--413},
	year        ={1978},
}
\bib{Faith1975}{article}{
  author    = {Faith,C.},
  title     = {On a theorem of Chatters},
  journal   = {Communications in Algebra},
  volume    = {3},
  number    = {2},
  year      = {1975},
  pages     = {169--184},
}
\bib{Glaz}{book}{
	title       ={Commutative Coherent Rings},
	author      ={Glaz, S.},
	year        ={1989},
	publisher   ={Springer--Verlag},
	series      ={Lecture Notes in Mathematics},
	volume      ={1371}
}
\bib{ISHIRO2022106898}{article}{
    title = {Another proof of the almost purity theorem for perfectoid valuation rings},
    journal = {Journal of Pure and Applied Algebra},
    volume = {226},
    number = {7},
    pages = {106898},
    year = {2022},
    author = {Ishiro, S.},
    author = {Shimomoto, K.}
}
\bib{Kaplansky1958}{article}{
	author 		={Kaplansky, I.},
	journal 	={Ann. Math.},
	number 		={2},
	pages 		={372--377},
	title 		={Projective Modules},
	volume 		={68},
	year 		={1958}
}
\bib{Kunz}{article}{
    author      ={Kunz, E.},
    year        ={1976},
    title       ={On Noetherian Rings of Characteristic $p$},
    journal     ={Amer. J. Math.},
    volume      ={98},
    number      ={4},
    pages       ={999--1013}
}
\bib{Lam1999}{book}{
	title       ={Lectures on Modules and Rings},
	author      ={Lam, T. Y.},
	year        ={1999},
	publisher   ={Springer--Verlag},
	series      ={Graduate Texts in Mathematics},
	volume      ={189}
}
\bib{Lurie}{misc}{
    author = {Lurie, J.},
    title  = {Level Structures on Elliptic Curves},
    year   = {2020},
    note = {available at \url{https://www.math.ias.edu/~lurie/}}
}
\bib{MO2001}{article}{
  author    = {Masaike,K.},
  author    = {Ozaki, Y.},
  title     = {Direct Product Decomposition of Semi-Hereditary Rings},
  journal   = {Bulletin of Tokyo Gakugei University. Series IV, Mathematics and Natural Sciences},
  volume    = {53},
  year      = {2001},
  pages     = {19--23},
}
\bib{Matsumura1986}{book}{
	author      ={Matsumura, H.},
	title       ={Commutative Ring Theory},
	year        ={1986},
	publisher   ={Camb.\ Univ.\ Press},
	volume      ={8},
	series      ={Cambridge Studies in Advanced Mathematics}
}
\bib{Quentel1971}{article}{
  author    = {Quentel,Y.},
  title     = {Sur la compacit{\'e} du spectre minimal d'un anneau},
  journal   = {Bulletin de la Soci{\'e}t{\'e} Math{\'e}matique de France},
  volume    = {99},
  year      = {1971},
  pages     = {265--272},
}
\bib{SHIMOMOTO201124}{article}{
	title       ={F-coherent rings with applications to tight closure theory},
	journal     ={Journal of Algebra},
	volume      ={338},
	number      ={1},
	pages       ={24-34},
	year        ={2011},
	author      ={Shimomoto, K.},
}
\bib{shimomoto}{misc}{
    author ={Shimomoto, K.},
    title = {Lectures on perfectoid geometry for commutative algebraists}, note = {in preparation}
}
\end{biblist}
\end{bibdiv}
\end{document}